\theoremstyle{plain}
  \newtheorem{theorem}{Theorem}[section]
  \newtheorem{lemma}[theorem]{Lemma}
  \newtheorem{corollary}[theorem]{Corollary}
\theoremstyle{definition}
\theoremstyle{remark}
\newcommand{\comma}{\textrm{,}}
\newcommand{\period}{\textrm{.}}
\newcommand{\bra}[1]{\left( #1 \right)}
\newcommand{\sqa}[1]{\left[ #1 \right]}
\newcommand{\cur}[1]{\left\{ #1 \right\}}
\newcommand{\abs}[1]{\left| #1 \right|}
\newcommand{\rnum}{\mathbb{R}}
\newcommand{\Prob}{\mathbb{P}}
\newcommand{\cA}{\mathcal{A}}
\newcommand{\cF}{\mathcal{F}}
\newcommand{\sgn}{\operatorname{sgn}}
\newcommand{\veps}{\varepsilon}
\begin{document}

\title{Zero noise limits using local times}
\author{Dario Trevisan\footnote{ \small{Scuola Normale Superiore, Piazza dei Cavalieri, 7, 56126, Pisa, Italy }. Email: \emph{dario.trevisan@sns.it}.}}




\maketitle

\begin{abstract}
We consider a well-known family of SDEs with irregular drifts and the correspondent zero noise limits. Using (mollified) local times, we show which trajectories are selected. The approach is completely probabilistic and relies on elementary stochastic calculus only.
\end{abstract}


\section{Introduction and results}

For fixed $\gamma \in [0,1)$, let us consider the following ODE, in integral form:
\[ x\bra{t} = \int_0^t\sgn(x\bra{s}) \abs{x\bra{s}}^\gamma ds \quad t\ge0\]
where $\sgn\bra{x} = I_{\cur{x>0}} - I_{\cur{x<0}}$. It is well known that there are infinitely many solutions and they are all of the form $\pm H_\gamma\bra{t-t_0}$, with
\[ \quad H_\gamma \bra{s}= [(1-\gamma) s^+]^{\frac {1}{1-\gamma}}  \comma\]
for some $t_0 \ge 0$.

Given $\veps>0$, let us consider a small random perturbation of the ODE above:

\begin{equation}\label{sde}\left\{ \begin{array}{l} d X^\veps_t = \sgn(X^\veps_t) \abs{X^\veps_t}^\gamma dt+ \veps dW_t \\ X^\veps_0 = 0 \period \end{array}\right. \end{equation}
Weak existence and uniqueness in law are then guaranteed by Girsanov theorem and Novikov condition: indeed, the random varibale $K \int_0^T  \abs{W_s}^{2\gamma}$ is exponentially integrable for any $T,K>0$. Therefore one can consider a weak solution $X^\veps$ defined on  some space $\bra{\Omega, \cA, \Prob, \bra{\cF}_{0\le t \le T}, \bra{W_t}_{0\le t \le T}} $ and let $\mu^\veps = \mu_\gamma^\veps$ be the law of $X^\veps$, which is a probability measure on the Borel sets of $C_0[0,T]$, equivalent to the Wiener measure.

It is also known that the family of measures $\bra{\mu_\gamma^\veps}_{0<\veps\le1}$ is tight: indeed, in the case $\gamma = 0$, it follows from the estimate, with $C = 1$,
\[ \abs{X^\veps_t -X^\veps_s} \le  C\abs{t-s} + \veps \abs{W_t - W_s}\period \]
In general, estimating the drift term $\abs{x}^\gamma \le 1 + \abs{x}$, one obtain that with arbitrary high probability $X^\veps_t$ is uniformly bounded in the interval $[0,T]$, and so the estimate above holds, for some $C>0$. By tightness, therefore, one can consider some sequence $\veps_n \to 0$ such that $\mu^{\veps_n}$ weakly converges to some probability measure $\mu$ (depending on $\gamma$).

To characterize $\mu$ is the prototype of zero noise problems, which appear in many contexts: for brevity, here we refer the extended overview presented in Chapter 1, Section 5 in \cite{flandoli}. Among the results already there, we remark that the work \cite{attanasio} discusses a zero noise limit for some linear PDEs of transport type related to the family of ODEs introduced above. For more recent developments, not included in \cite{flandoli}, we mention the forthcoming article \cite{delarue}, where a two-dimensional zero noise problem with discontinuous drift is solved; \cite{borkar} and \cite{buckdahn}, where zero noise problems for perturbed ODEs with respectively continuous and measurable drifts are discussed.

However, in the context of perturbed one-dimensional SDEs, the most general results are still those in \cite{bafico}, which rely on explicit estimates for exit times, obtained by solving related PDEs. The results obtained there show that, in the particular case introduced above, $\mu$ is concentrated only on the trajectories $\pm H_\gamma$, which leave immediately the origin.

The aim of this short paper is to provide an entirely probabilistic proof of a concentration result for $\mu$, in this special case as strong as that obtainable by applying the methods in \cite{bafico}, but relying only on applications of It\^o(-Tanaka) formula and elementary estimates for stochastic integrals. In fact, local times and Tanaka formula appear only in the proof for the special case $\gamma = 0$, but the general case is a technical development of the simple idea exploited there, after a suitable mollification procedure.

Before stating the main results, we remark the family of examples introduced above is well studied in the literature and much more can already be said about the limit probability $\mu$. In \cite{gradinaru} and \cite{hermann}, large deviations estimates are proved, by computing explicitly the density of $X^\veps_t$ and expanding it in terms of eigenfunctions of a Sch\"rodinger operator (there are currently many efforts to extend the classical Wentzell-Freidlin large deviations theory in the case of irregular coefficients: see e.g.\ \cite{caramellino} and the monograph \cite{feng}). Another approach is presented in \cite{pamen}, where a general setting for small noise problems is introduced, using Malliavin calculus both to prove strong existence and compactness of families of strong solutions for the SDE. We remark that computations involving It\^o-Tanaka formulas and local times appear also in these works, but they are not used to investigate the concentration properties of $\mu$. In the proof of Proposition 3 in \cite{gradinaru}, local times appear when manipulating the expression provided by Girsanov theorem, while in \cite{pamen} they appear in Example 2.11, in the expression for the Malliavin derivative of a solution $X^\veps$.

The concentration result for $\mu$ that we are going to prove is indeed a corollary of the following theorem.

\begin{theorem}\label{theo:01}
Given $T>0$, there exist positive numbers $\bar{t}, h, \alpha$, depending only on $\gamma, T, \veps$, infinitesimal as $\veps \to 0$ (and the other parameters are fixed) such that, given any weak solution $\bra{X^\veps_t}_{0\le t\le T}$ of \eqref{sde}, with probability greater than $1-\alpha$, it holds
\begin{itemize}
\item either for any $\bar{t}\le t \le T$, $\abs{X^\veps_t - H^\gamma\bra{t}} \le h$
\item or for any $\bar{t}\le t \le T$, $\abs{X^\veps_t + H^\gamma\bra{t}} \le h$.
\end{itemize}
\end{theorem}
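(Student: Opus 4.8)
\emph{Proof proposal.} The plan is to split the statement into an \emph{escape} estimate and a \emph{trapping} argument. Fix a level $b_0$ of order $\veps\sqrt{T\log(1/\veps)}$ and set $\tau:=\inf\{t:|X^\veps_t|=b_0\}$. The two facts to establish are: (i) with probability at least $1-\alpha_1$ one has $\tau\le\bar{t}$; and (ii) conditionally on $\cF_\tau$ and on $\{\tau\le\bar{t}\}$, with probability at least $1-\alpha_2$ the process stays within $h$ of $H_\gamma$ or of $-H_\gamma$ on all of $[\bar{t},T]$. The theorem then follows with $\alpha:=\alpha_1+\alpha_2$, the dichotomy corresponding to $X^\veps_\tau=+b_0$ or $-b_0$. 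The mechanism for (ii) is elementary. On $\{\tau\le\bar{t}\}$ consider the case $X^\veps_\tau=+b_0$ (the other is symmetric). By the strong Markov property, $W_{\tau+\cdot}-W_\tau$ is a Brownian motion independent of $\cF_\tau$, so the event $E:=\{\veps\sup_{0\le s\le T}|W_{\tau+s}-W_\tau|\le b_0/4\}$ has conditional probability at least $1-\alpha_2$ with $\alpha_2=\Prob(\veps\sup_{[0,T]}|W|>b_0/4)$, which tends to $0$ with the chosen $b_0$. On $E$, since the drift $\sgn(X^\veps)|X^\veps|^\gamma$ is $\ge0$ whenever $X^\veps>0$, starting from $X^\veps_\tau=b_0$ the process cannot fall below $3b_0/4$ before a first return to $0$; hence there is no such return, $X^\veps>0$ on $[\tau,T]$, the local time at $0$ is frozen after $\tau$, and $X^\veps_t=b_0+\int_\tau^t(X^\veps_s)^\gamma\,ds+\veps(W_t-W_\tau)$ there. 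Using only that $x\mapsto x^\gamma$ is increasing, a one-sided Gronwall/ODE comparison with $\dot v=v^\gamma$ sandwiches $X^\veps_t$ between $H_\gamma(t-\tau+s_0)$ and $H_\gamma(t-\tau+s_1)$ with $s_0,s_1$ of order $b_0^{1-\gamma}$; since $\tau\le\bar{t}$ and $H_\gamma'$ is bounded on $[0,T+1]$, this yields $|X^\veps_t-H_\gamma(t)|\le h$ on $[\bar{t},T]$ for $h$ of order $b_0^{1-\gamma}+\bar{t}$.

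It remains to prove the escape estimate (i); here the special case $\gamma=0$ and the mollification enter. For $\gamma=0$, It\^o--Tanaka gives $|X^\veps_t|=t+\veps\beta_t+L^0_t$, where $\beta_t:=\int_0^t\sgn(X^\veps_s)\,dW_s$ is a Brownian motion by L\'evy's characterization (as $X^\veps$ spends zero Lebesgue time at $0$) and $L^0_t\ge0$; hence $|X^\veps_t|\ge t-\veps\sup_{[0,T]}|\beta|$, which exceeds $b_0$ at $t=\bar{t}$ on the high-probability event $\{\veps\sup_{[0,T]}|\beta|\le\bar{t}-b_0\}$, provided $\bar{t}$ exceeds $b_0$ plus a term of order $\veps\sqrt{T\log(1/\alpha_1)}$. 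For general $\gamma\in(0,1)$ the drift $|x|^\gamma$ degenerates at the origin, so the argument must be localized. It\^o--Tanaka still gives $|X^\veps_t|=\int_0^t|X^\veps_s|^\gamma\,ds+\veps\beta_t+L^0_t$, so $A_t:=|X^\veps_t|-\veps\beta_t$ is nondecreasing and $\ge0$, whence $|X^\veps_t|\ge m_t+\veps\beta_t$ with $m_t:=\sup_{s\le t}(-\veps\beta_s)$; the process $m_\cdot+\veps\beta_\cdot$ is a reflected Brownian motion, equal in law to $\veps|W_\cdot|$ by L\'evy's theorem, which brings $|X^\veps|$ up to a small level $\ell$ of order $\rho(\alpha_1)\,\veps\sqrt{\bar{t}_1}$ within a short time $\bar{t}_1$ with probability $\ge1-\alpha_1$. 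From level $\ell$ the drift is already strong enough: the comparison of the previous paragraph, run on a further noise-controlled event, lifts $|X^\veps|$ from $\ell$ to $b_0$ within an extra time $\bar{t}_2$ of order $b_0^{1-\gamma}$, and one takes $\bar{t}:=\bar{t}_1+\bar{t}_2$. Equivalently — the mollification route alluded to in the introduction — one applies It\^o's formula to a $C^2$ odd approximation $\phi_\delta$ of $x\mapsto\frac{1}{1-\gamma}|x|^{1-\gamma}$, exact for $|x|\ge\delta$, obtaining $\phi_\delta(X^\veps_t)=t+(\text{martingale})+(\text{nonnegative terms})+(\text{errors of order }\veps^2\delta^{-1-\gamma}\text{, or supported on }\{|X^\veps|\le\delta\})$, so that $|X^\veps_t|$ is bounded below by $H_\gamma$ of $t$ minus a vanishing quantity, directly generalizing the $\gamma=0$ identity.

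The routine steps are the Gaussian maximal inequalities bounding $\sup|W|$ and $\sup|\beta|$, the one-sided comparison lemma for $\dot v=v^\gamma$ on the positive half-line, and checking that $\bar{t}$, $h$, $\alpha$ all tend to $0$ as $\veps\to0$: this requires choosing $\alpha_1(\veps),\alpha_2(\veps)\to0$ slowly enough that the constraint linking $\bar{t}_1$ to $\bar{t}_2\log(1/\alpha_1)/\rho(\alpha_1)^2$ remains compatible with $\bar{t}_1\to0$, which is possible since $\bar{t}_2$ is of polynomial order $\veps^{1-\gamma}$ up to logarithms. The one genuine obstacle is the escape estimate (i) for $\gamma\in(0,1)$: turning into a quantitative probability bound the heuristic that the process leaves a neighborhood of $0$ quickly despite the vanishing drift there, which amounts to matching the intermediate scale $\ell$ (or the mollification width $\delta$) to $\veps$ so that the noise-driven and drift-driven regimes overlap, while keeping careful track of the local time $L^0$ and of the occupation time of $X^\veps$ at $0$.
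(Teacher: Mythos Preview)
Your overall escape-then-trapping scheme matches the paper's structure (Lemma~2.1 is the escape, the subsequent comparison is the trapping), but the execution differs substantially, and one of your two suggested routes for the escape is incorrect.

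\medskip

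\textbf{Where you diverge from the paper.} The paper never introduces a hitting time or the strong Markov property. It works on a \emph{single} good event, namely $\{\veps\sup_{[0,T]}|\int_0^t|X^\veps_s|_\delta'\,dW_s|\le\veps^a\}\cap\{\veps\sup_{[0,T]}|W|\le\veps^a\}$, and all estimates are pathwise on that event. The key idea for $\gamma\in(0,1)$ is to mollify $x\mapsto|x|$ (\emph{not} $|x|^{1-\gamma}$): since $|\cdot|$ is convex, $|\cdot|_\delta''\ge 0$ everywhere and in fact $|\cdot|_\delta''\ge 3/(4\delta)$ on $\{|x|\le\delta/2\}$. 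In It\^o's formula for $|X^\veps_t|_\delta$ the second-derivative term therefore contributes a \emph{positive} push of order $\veps^2/\delta$ near the origin, which compensates for the vanishing drift there. Balancing the near-zero push $\veps^2/\delta$ against the drift push $\delta^\gamma$ away from zero gives the scale $\delta\sim\veps^{2/(1+\gamma)}$ and the linear lower bound $|X^\veps_t|_\delta\ge c\,\delta^\gamma t-\veps^a$; this is Lemma~2.1. After that, one more application of It\^o (still with $|\cdot|_\delta$) plus the comparison Lemma~2.2 gives both the lower and upper bounds against $H_\gamma$.

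\medskip

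\textbf{Your Route 2 does not work as stated.} If $\phi_\delta$ agrees with $\frac{1}{1-\gamma}|x|^{1-\gamma}$ for $|x|\ge\delta$, then on $\{|X^\veps|\ge\delta\}$ the It\^o second-derivative term is $\tfrac{\veps^2}{2}\phi_\delta''(X^\veps)=-\tfrac{\gamma\veps^2}{2}|X^\veps|^{-\gamma-1}$, which is \emph{negative}, not nonnegative; your claimed decomposition ``$t+\text{martingale}+\text{nonnegative terms}+\text{errors}$'' is wrong. The paper's point is precisely to avoid this by mollifying the convex function $|x|$ instead.

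\medskip

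\textbf{Your Route 1 is a valid alternative.} The reflected-Brownian-motion lower bound $|X^\veps_t|\ge m_t+\veps\beta_t$ (from It\^o--Tanaka, $A_t=\int_0^t|X^\veps_s|^\gamma ds+L^0_t$ nondecreasing, and L\'evy's identity) is correct and does yield a working escape estimate. The two-stage argument (noise brings $|X^\veps|$ to a level $\ell$ by time $\bar t_1$; then drift plus a short noise-control window lifts it to $b_0$ by $\bar t_1+\bar t_2$) goes through once the scales are matched: you need $\veps\sqrt{\bar t_2}\ll\ell\ll\veps\sqrt{\bar t_1}$ together with $\bar t_2\sim b_0^{1-\gamma}$ and $\veps\sqrt{T}\ll b_0$, which is solvable with $\bar t_1,\bar t_2,b_0,\ell$ all polynomially small in $\veps$. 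Compared to the paper, this approach is more modular and uses only the unmollified It\^o--Tanaka formula, at the cost of juggling several intermediate scales and stopping times; the paper's mollification of $|x|$ is more streamlined and makes the critical scale $\veps^{2/(1+\gamma)}$ appear directly from a single balance.
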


The main feature of this result, together with its proof, is that it provides a rigorous deduction of the following intuition, which is not evident at all in the classical approach in \cite{bafico}: as $\veps \to 0$, the trajectory $X^\veps\bra{\omega}$ is forced by the noise to follow closely one of the two extremal trajectories $\pm H^\gamma$, and this selection happens in a small time interval $[0,\bar{t}]$. Moreover, the quantities $\bar{t}, h, \alpha$ can be computed explicitly.

We deduce immediately the following existence and characterization result for the zero noise limit probability $\mu_\gamma$.

\begin{corollary}\label{coro:01}
The weak limit $\mu = \lim_{\veps \to 0}  \mu^\veps$ exists and is given by  \begin{equation}\label{law} \mu =\frac 1 2 \delta_{H^\gamma} +\frac 1 2 \delta_{-H^\gamma}\period\end{equation}
\end{corollary}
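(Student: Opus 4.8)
\emph{Plan of proof.} The plan is to deduce Corollary~\ref{coro:01} from Theorem~\ref{theo:01} together with the portmanteau theorem and a symmetry observation; no further analytic input should be needed. First I would record two elementary facts. Since the drift $b(x)=\sgn(x)\abs{x}^\gamma$ is odd and $-W$ is again a Brownian motion, whenever $X^\veps$ solves \eqref{sde} the process $-X^\veps$ solves it as well (driven by $-W$); by uniqueness in law $\mu^\veps$ is invariant under the reflection $R\colon w\mapsto -w$ on $C_0[0,T]$, and hence so is any weak limit of the $\mu^\veps$. Moreover, by the tightness of $\bra{\mu^\veps}_{0<\veps\le1}$ recalled above and Prokhorov's theorem, the family is relatively compact in the weak topology, so it suffices to identify \emph{every} subsequential weak limit; once each such limit is shown to equal the right-hand side of \eqref{law}, a standard argument (every subsequence of $\bra{\mu^\veps}$ has a further subsequence converging to that same measure) yields convergence of the whole family as $\veps\to0$.

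So I would fix a subsequential limit $\mu=\lim_n\mu^{\veps_n}$ with $\veps_n\to0$, and for $\tau,\delta>0$ introduce
\[ F_{\tau,\delta} := \cur{ w\in C_0[0,T] : \sup_{\tau\le t\le T}\abs{w(t)-H^\gamma(t)}\le\delta } \cup \cur{ w : \sup_{\tau\le t\le T}\abs{w(t)+H^\gamma(t)}\le\delta }\comma \]
which is closed in $C_0[0,T]$ because $w\mapsto\sup_{\tau\le t\le T}\abs{w(t)\mp H^\gamma(t)}$ is $1$-Lipschitz for the supremum norm. Given $\tau$ and $\delta$, I would choose $\veps$ small enough that the quantities $\bar t,h,\alpha$ of Theorem~\ref{theo:01} satisfy $\bar t\le\tau$, $h\le\delta$ and $\alpha\le\delta$; then on the event of probability $>1-\alpha$ furnished by that theorem one of the two alternatives holds throughout $[\bar t,T]\supseteq[\tau,T]$ with $h\le\delta$, so that $X^\veps\in F_{\tau,\delta}$, whence $\mu^\veps(F_{\tau,\delta})\ge1-\alpha\ge1-\delta$. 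The portmanteau theorem for closed sets then gives $\mu(F_{\tau,\delta})\ge\limsup_n\mu^{\veps_n}(F_{\tau,\delta})\ge1-\delta$.

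The proof then finishes by letting the parameters tend to $0$ in the right order. With $\tau$ fixed, the sets $F_{\tau,\delta}$ decrease, as $\delta\downarrow0$ along a sequence, to $G_\tau:=\cur{w:\,w=H^\gamma\text{ on }[\tau,T]}\cup\cur{w:\,w=-H^\gamma\text{ on }[\tau,T]}$, so continuity from above gives $\mu(G_\tau)=1$. Letting now $\tau\downarrow0$ along a sequence, the sets $G_\tau$ decrease to $\cur{w:\,w=H^\gamma\text{ on }(0,T]}\cup\cur{w:\,w=-H^\gamma\text{ on }(0,T]}$, which by continuity of paths and $w(0)=0=H^\gamma(0)$ equals $\cur{H^\gamma,-H^\gamma}$; hence $\mu\bra{\cur{H^\gamma,-H^\gamma}}=1$. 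Since $H^\gamma(T)>0$, the paths $H^\gamma$ and $-H^\gamma$ are distinct, so $\mu=a\,\delta_{H^\gamma}+(1-a)\,\delta_{-H^\gamma}$ for a unique $a\in[0,1]$; the reflection invariance from the first step exchanges the two Dirac masses, forcing $a=\tfrac12$. This identifies every subsequential limit with \eqref{law}, completing the argument.

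I do not expect a serious obstacle here: the whole argument is a soft consequence of Theorem~\ref{theo:01}. The only points that require care are applying the portmanteau inequality in the correct direction (a lower bound on the measure of a \emph{closed} set) and respecting the order of the limits --- first $\delta\downarrow0$, then $\tau\downarrow0$ --- since the time $\bar t$ in Theorem~\ref{theo:01} depends on $\veps$ and therefore cannot be made to vanish simultaneously with $\veps$ within a single estimate.
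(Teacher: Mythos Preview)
Your proof is correct and follows essentially the same approach as the paper's: tightness reduces to subsequential limits, Theorem~\ref{theo:01} plus portmanteau forces the limit onto $\{H^\gamma,-H^\gamma\}$, and the reflection symmetry of \eqref{sde} fixes the weights at $\tfrac12$. The only cosmetic differences are that the paper applies portmanteau to the open sets $\{||\omega(t)|-H_\gamma(t)|>\eta\}$ at a fixed time $t$, whereas you work with the complementary closed tubes in sup norm over $[\tau,T]$; your version is in fact a bit more carefully written, making the two limits $\delta\downarrow0$, $\tau\downarrow0$ and the passage from ``$|\omega|=H_\gamma$ for all $t$'' to ``$\omega\in\{H^\gamma,-H^\gamma\}$'' explicit.
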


\begin{proof}
Since $\bra{\mu^\epsilon}_{\epsilon>0}$ is tight, it is enough to consider a convergent subsequence $\mu^{\epsilon_n}$ and prove that its limit is given by the expression above.

For fixed $t,\eta >0$, it holds
\[ \lim_{\veps \to 0} \mu^{\epsilon} \cur{ \omega \in C_0[0,T] \,\, : \, \abs{ \abs{\omega\bra{t}} - H_\gamma \bra{t}  } > \eta }  \le \alpha \comma \]
whenever $\veps$ is small enough so that $\bar{t} \le T$ and $h \le \eta$, where $\bar{t}, h, \alpha$ are those provided by the theorem above.

By lower semicontinuity of weak convergence of measures on open sets, it holds therefore
\[ \mu\bra{ \omega \in C_0[0,T] \,\, : \, \abs{ \abs{\omega\bra{t}} - H_\gamma \bra{t}  } > \eta } = 0 \comma\]
which entails that $\mu$ is a probability measure concentrated at most on $\pm H_\gamma$, being $t, \eta$ arbitrary.

The simmetry of the problem allows us to conclude that $\mu$ is given by \eqref{law}, since every $\mu^{\epsilon}$ is  invariant under the transformation $\omega \mapsto -\omega$.
\end{proof}

\section{Proof of Theorem \ref{theo:01}}\label{section2}

\subsection{Case $\gamma = 0$}


Given $\veps>0$ and a weak solution $X^\epsilon$, we write It\^o-Tanaka formula for the local time at $0$, with respect to the semimartingale $X^\veps$ (Theorem 1.2, Chapter VI in \cite{revuz}), i.e.\
\[ \abs{X^\veps_t} =  \int_0^t \sqa{\sgn\bra{X^\veps_s}}^2 ds+ \veps \int_0^t \sgn\bra{X^\veps_s}  dW_s + L^0\sqa{X^\veps}_t \comma \]
for any $t\ge0$. Since $\sgn\bra{x}^2 = I_{\cur{x\neq0}}$ and the local time process $L^0\sqa{X^\veps}_t$ is non negative, we obtain
\[ \abs{X^\veps_t} \ge \int_0^t I_{\cur{X^\veps_s \neq 0}} ds + \veps \int_0^t \sgn\bra{X^\veps_s} dW_s \period \]
As already remarked, by Girsanov theorem, the law of $\bra{X^\veps}_{0\le t \le T}$ is equivalent to the Wiener measure and therefore
\[ X^\veps_t \bra{\omega} \neq 0\quad \textrm{ $\Prob \otimes \mathscr{L}$-a.e.\ $(\omega,t) \in \Omega \otimes [0,T]$, } \]
where $\mathscr{L}$ is the Lebesgue measure on the interval: indeed the same holds true for a Wiener process in place of $X^\veps$. It follows that, almost surely, for $0\le t \le T$, $\int_0^t I_{\cur{X^\veps_s \neq 0} }ds = t$ and therefore
\begin{equation} \label{eq-gamma-0-min} \abs{X^\veps_t} \ge t - \veps \abs{  \int_0^t \sgn\bra{X^\veps_s} dW_s} \period \end{equation}
On the other hand, for any $t\ge0$, directly from \eqref{sde} written in integral form, we deduce that
\begin{equation}\label{eq-gamma-0-max} \abs{X^\veps_t} \le t + \veps \abs{  W_t } \period\end{equation}


The estimates \eqref{eq-gamma-0-min} and \eqref{eq-gamma-0-max} above imply that, given $\eta >0$, the event
\[ \cur{ \sup_{0\le t \le T} \big| \abs{X^\veps_t} - t \big| > \eta }\]  is contained in the union

\[\cur{ \sup_{0\le t \le T} \big| \int_0^t \sgn\bra{X^\veps_s} dW_s \big|> \frac{\eta}{\veps}} \cup \cur{ \sup_{0\le t \le T} \abs{W_t} > \frac{\eta} {\veps}}\period\]
On the other hand, Doob's inequality and It\^o's isometry assure that
\[ \Prob\bra{ \sup_{0\le t \le T} \big| \int_0^t \sgn\bra{X^\veps_s} dW_s \big|> \frac{\eta} {\veps}}+  \Prob\bra{ \sup_{0\le t \le T} \abs{W_t} > \frac{\eta} {\veps}} \le 2\veps^2 \frac{T}{\eta^2} \period \]

In order to compute $\bar{t}, h, \alpha$ as required by the theorem, we fix any $a$, with $0<a<1$ and put $\eta = \veps^a$ above, so that, with probability greater than $1-\alpha$, where $\alpha = 2\veps^{2(1-a)} T$, it holds
\[ \sup_{0\le t \le T} \abs{ \abs{X^\veps_t} - t } \le \veps^a \period\]

Then, we put $h =  \veps^a$ and $\bar{t} = 2 h $ so that, in the event above, it holds for any $\bar{t} \le t \le T$,
\[ \abs{X^\veps_t}  \ge t - h \ge \bar{t} - h  = h > 0 \period \]
and in particular $X^\veps_t$ does not change sign. Therefore,
\begin{itemize}
\item either for any $\bar{t}\le t \le T$, $\abs{X^\veps_t} = X^\veps_t$ and $\abs{ X^\veps_t - t } \le h$,
\item or  for any $\bar{t}\le t \le T$, $\abs{X^\veps_t} = -X^\veps_t$ and $\abs{ X^\veps_t + t } \le h$.
\end{itemize} 

\subsection{Case $\gamma \in (0,1)$}

The main difficulty in this case is due to the fact that the drift term is infinitesimal in zero. Indeed, if we repeat the same passages as above, we obtain that
\begin{equation} \abs{X^\veps_t} =  \int_0^t \abs{X^\veps_s}^\gamma ds+ \veps \int_0^t \sgn\bra{X^\veps_s} dW_s + L^0 \sqa{X^\veps}_t \period \end{equation}
If we simply drop the local time we cannot conclude that $\abs{X^\veps_t}$ grows enough and the solution leaves the origin. We are going to see that the local time term indeed contains exactly the information that we need to conclude that the solution with high probability moves away from zero. This, combined with the fact that the drift drags the solution away from zero in a finite time, will lead to the conclusion.

To extract easily this information from the local time term, we mollify the map $x \mapsto \abs{x}$ and define
\[ x \mapsto \abs{x}_\delta = \frac{1}{\delta} \int_{-\delta}^{\delta} \rho\bra{y / \delta} \abs{x -y} dy \comma\]
where $\rho\bra{x} \in C^\infty\bra{\rnum}$ is non negative, supported in $[-1,1]$ with $\rho\bra{x}\ge 3/4$ for $x \in [-\frac 1 2, \frac 1 2]$ and $\int_{-1}^1 \rho\bra{x} dx = 1$.

The positive map defined in this way is smooth and for any $x \in \rnum$, it holds 
\[ \abs{ \abs{ x} - \abs{x}_\delta} \le \delta \textrm{, } \abs{x}_\delta' \le 1 \textrm{ and } \abs{x}_\delta'' \ge 0 \period\] Moreover, the assumption $\rho\bra{x}\ge 3/4$ for $x \in [-\frac 1 2, \frac 1 2]$ entails that, when $\abs{x}\ge \delta/2$,  it holds $\abs{x}_\delta' \sgn\bra{x} \ge 1/2$, while for $\abs{x}\ge \delta$, it holds $\abs{x}_\delta' \sgn\bra{x} \ge 1$. Finally, for $\abs{x} \le \delta/2$, $\abs{x}_\delta'' \ge 3 / 4 \delta$.

 
We apply It\^o formula to $\abs{X^\veps_t}_\delta$ so that, for $t\ge0$, it holds
\begin{equation}\label{ito}\abs{X^\veps_t}_\delta = \abs{0}_\delta + \int_0^t\abs{X^\veps_s}_\delta' \sgn\bra{X^\veps_s} \abs{X^\veps_s}^\gamma + \veps\int_0^t\abs{X^\veps_s}_\delta'dW_s + \frac  1 2 \veps^2 \int_0^t\abs{X^\veps_s}_\delta'' ds \period\end{equation}

Let us state a key estimate in form of a lemma. 

\begin{lemma}\label{lemma1}
Fix $a >2\gamma/\bra{1+\gamma}$. Then there are positive numbers $\bar{t}$, $\delta$, depending only on $\gamma,a,\veps$, infinitesimal as $\veps \to 0$ (and the other parameters are fixed) such that, a.s.\ on the event
\begin{equation}\label{event} \cur{\sup_{0\le t \le T} \veps \abs{\int_0^t \abs{X^\veps_s}_\delta'dW_s} \le  \veps^a}\comma \end{equation}
for every $t$, with $\bar{t} \le t \le T$, it holds $\abs{X^\veps_t}_\delta\ge 2\delta +\veps^a$ and  $\abs{X^\veps_t}\ge \delta$.
\end{lemma}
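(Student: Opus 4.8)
The plan is to exploit the Itô expansion \eqref{ito} for $\abs{X^\veps_t}_\delta$, extracting a lower bound that forces growth away from the origin. Working on the event \eqref{event}, the stochastic integral term is uniformly controlled by $\veps^a$, so the evolution of $\abs{X^\veps_t}_\delta$ is governed by the drift term $\int_0^t \abs{X^\veps_s}_\delta'\sgn(X^\veps_s)\abs{X^\veps_s}^\gamma\,ds$ (which is nonnegative, since $\abs{x}_\delta'\sgn(x)\ge 0$ always) and the second-order term $\tfrac12\veps^2\int_0^t\abs{X^\veps_s}_\delta''\,ds$ (also nonnegative, by convexity of $\abs{\cdot}_\delta$). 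The crucial observation — the point of the mollification — is that when $\abs{X^\veps_s}\le\delta/2$ we have $\abs{X^\veps_s}_\delta''\ge 3/(4\delta)$, so the diffusion term contributes at rate $\gtrsim \veps^2/\delta$ whenever the process is close to the origin, while the convexity-induced local-time-like push is \emph{only} active there; once $\abs{X^\veps_s}\ge\delta$, the drift term alone contributes at rate $\ge \abs{X^\veps_s}^\gamma$, which is the usual ODE mechanism.

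The key step I would carry out: define $\bar t$ and $\delta$ as suitable powers of $\veps$ and run a comparison/bootstrap argument. First choose $\delta = \veps^b$ for an exponent $b$ to be tuned so that $\delta \gg \veps^a$ (i.e. $b < a$) and so that the diffusion push over a short time can overcome $2\delta + \veps^a$. Concretely, on \eqref{event}, as long as $\abs{X^\veps_s}_\delta$ stays below, say, $3\delta$ (so that $\abs{X^\veps_s}\le\abs{X^\veps_s}_\delta+\delta\le 4\delta$, and in particular cannot have left a neighborhood of $0$), I want to show the drift-plus-diffusion terms already push $\abs{X^\veps_t}_\delta$ past $2\delta+\veps^a$ within time $\bar t$; here one uses that the process, being (after Girsanov) absolutely continuous w.r.t.\ Brownian motion, spends a definite fraction of any time interval in $\{\abs{x}\le\delta/2\}$ only if it hasn't already escaped — so either it escapes the $\delta/2$-band quickly (and then the $\abs{x}^\gamma$ drift takes over) or it lingers near $0$ and the $\veps^2/\delta$ convexity term accumulates to $\gtrsim \bar t\,\veps^2/\delta$, which we arrange to exceed $2\delta+\veps^a$. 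Once $\abs{X^\veps_t}_\delta\ge 2\delta+\veps^a$ is achieved at some time $\le\bar t$, I show it is never lost for $t\le T$: from \eqref{ito}, $\abs{X^\veps_t}_\delta - \abs{X^\veps_{t_1}}_\delta \ge -2\veps^a$ cannot happen because the only negative contribution is the stochastic integral, bounded by $\veps^a$ in oscillation on \eqref{event}, and both other terms are nonnegative — so once above $2\delta+\veps^a$ the value stays above $2\delta$, hence $\abs{X^\veps_t}\ge\abs{X^\veps_t}_\delta-\delta\ge\delta$. The condition $a > 2\gamma/(1+\gamma)$ enters precisely in making the exponent arithmetic consistent: it guarantees there is room to choose $b\in(2\gamma/(1+\gamma)\cdot\tfrac{?}{?},\,\ldots)$, equivalently that $\delta$ can be taken large enough relative to $\veps^a$ yet small enough that the $\abs{x}^\gamma$-drift on scale $\delta$ still dominates $\veps^a/\bar t$.

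The main obstacle I anticipate is the middle step — quantifying the "either escape fast or accumulate convexity" dichotomy into a clean deterministic inequality, since $\abs{X^\veps_s}$ could oscillate in and out of the $\delta/2$-band. I would handle this by a stopping-time argument: let $\tau$ be the first time $\abs{X^\veps_t}_\delta$ hits $2\delta+\veps^a$, suppose for contradiction $\tau>\bar t$, and on $[0,\bar t\wedge\tau]$ split $\int_0^{t}\cdots ds$ according to $\{\abs{X^\veps_s}\le\delta/2\}$ versus its complement; on the complement within $\{\abs{X^\veps_s}_\delta < 2\delta+\veps^a\}$ one has $\delta/2 \le \abs{X^\veps_s} \le 3\delta+\veps^a$, so the drift integrand is $\ge \tfrac12(\delta/2)^\gamma$, while on the first set the second-order integrand is $\ge 3/(4\delta)\cdot\tfrac12\veps^2$; in either case the total accumulated is $\ge \bar t\cdot\min\{\tfrac12(\delta/2)^\gamma,\ \tfrac{3\veps^2}{8\delta}\}$ up to the one-time $-\veps^a$ from the martingale, and choosing $\bar t,\delta$ (as explicit powers of $\veps$ with exponents depending on $\gamma,a$) makes this exceed $2\delta+2\veps^a$, the contradiction. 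Balancing the two terms in that minimum against $2\delta$ is exactly where $b$ gets pinned down and where the hypothesis on $a$ becomes necessary.
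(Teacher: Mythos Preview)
Your core strategy --- split the integrand in \eqref{ito} according to $\{|X^\veps_s|\le\delta/2\}$ versus its complement, use the convexity term $\tfrac{3\veps^2}{8\delta}$ near zero and the drift term $\tfrac12(\delta/2)^\gamma$ away from zero, then balance these by taking $\delta\sim\veps^{2/(1+\gamma)}$ --- is exactly what the paper does. But your two-phase ``reach, then maintain'' architecture is both unnecessary and, as written, broken.

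The concrete gap is the size relation you assert: you want $\delta\gg\veps^a$, i.e.\ $b<a$. In fact the balancing forces $b=2/(1+\gamma)>1$, while the useful range for $a$ is $a<1$ (this is how the lemma is applied afterwards), so necessarily $\delta=\veps^b\ll\veps^a$. Your maintenance step then collapses: once $|X^\veps_{t_1}|_\delta=2\delta+\veps^a$, the martingale on the event \eqref{event} can move by as much as $2\veps^a$, which swamps $2\delta$; the conclusion ``stays above $2\delta$'' does not follow, and you cannot deduce $|X^\veps_t|\ge\delta$ this way.

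The fix is simply to drop the stopping time entirely. Your own observation that on $\{|X^\veps_s|\ge\delta/2\}$ the drift integrand is at least $\tfrac12(\delta/2)^\gamma$ and on $\{|X^\veps_s|<\delta/2\}$ the second-order integrand is at least $\tfrac{3\veps^2}{8\delta}$ holds for \emph{every} $s$, with no localization needed. Hence (allowing, as the paper does, for the possibly negative drift contribution $\ge -(\delta/2)^\gamma$ near zero when $\rho$ is not assumed even) one gets directly
\[
|X^\veps_t|_\delta \;\ge\; c\,\delta^\gamma\, t \;-\; \veps^a \qquad\text{for all }0\le t\le T,
\]
with $c$ a constant depending only on $\gamma$, once $\delta$ is chosen to equalize the two contributions. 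This linear bound is monotone in $t$, so the conclusion $|X^\veps_t|_\delta\ge 2\delta+\veps^a$ for $t\ge\bar t:=c^{-1}\delta^{-\gamma}(2\delta+2\veps^a)$ is immediate and permanent --- no maintenance argument is required. The condition $a>2\gamma/(1+\gamma)$ is precisely what makes $\veps^a/\delta^\gamma\to 0$, hence $\bar t\to 0$. This is the paper's route; your proposal reaches the same destination once you remove the detour.
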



\begin{proof} Using the estimates for $\abs{\cdot}_\delta$ and its derivatives, from \eqref{ito} we obtain that
\[  \abs{X^\veps_t}_\delta \ge \int_0^t \sqa{ I_{\cur{\abs{X^\veps_s}\ge \delta/2} }\frac {\delta^\gamma}{2^{\gamma+1}} + I_{\cur{\abs{X^\veps_s} < \delta/2}} \bra{\frac{3\veps^2}{8\delta} - \frac{ \delta^\gamma}{2^{\gamma}} } }ds + \veps\int_0^t\abs{X^\veps_s}_\delta'dW_s  \period \]

We put $\delta = c_1 \veps^{2/\bra{1+\gamma}}$, where $c_1= 2^{(\gamma- 2) /(1+\gamma)}$ is a positive number depending only on $\gamma$, such that
\[ \bra{\frac{3\veps^2}{8\delta} - \frac{ \delta^\gamma}{2^{\gamma}} } = \frac{\delta^\gamma}{2^{\gamma+1}}\period \]
Thanks to this choice, for any $t\ge0$, it holds
\[ \abs{X^\veps_t}_\delta \ge \frac{ \delta^\gamma}{2^{\gamma+1}} t - \veps \abs{ \int_0^t\abs{X^\veps_s}_\delta'dW_s} \period \]
In the event \eqref{event}, it entails that, for $0\le t \le T$,
\[ \abs{X^\veps_t}_\delta \ge \frac{ \delta^\gamma }{2^{\gamma+1}} t - \veps^{a}\period \]

We put $\bar{t} = 2^{\gamma+1} \bra{2 \delta + 2\veps^{a}}/\delta^{\gamma}$, which  is immediately seen to be infinitesimal as $\veps \to 0$, since $\gamma < 1$ and $a > 2\gamma/(1+\gamma)$. With this choice the inequality above entails that, for any $\bar{t}\le t \le T$,
\[ \abs{X^\veps_t}_\delta\ge 2 \delta + \epsilon^a \comma\]
which leads to the thesis, since $\abs{X_t^\epsilon} \ge \abs{X_t^\epsilon}_\delta - \delta$.
\end{proof}

In order to conclude the proof of Theorem \ref{theo:01}, let us fix $a \in ]2\gamma/(1+\gamma), 1[$, so that the lemma just proved provides some $\bar{t}$, $\delta$. 

Applying It\^o's formula to $\abs{X^\veps_t}_\delta$, starting from $\bar{t}$, we obtain
\[ \abs{X^\veps_t}_\delta \ge \abs{X^\veps_{\bar{t}}}_\delta  +  \int_{\bar{t}}^t\abs{X^\veps_s}_\delta' \sgn\bra{X^\veps_s} \abs{X^\veps_s}^\gamma -\veps\abs{\int_{\bar{t}}^t\abs{X^\veps_s}_\delta'dW_s} \comma\]
since $\abs{x}_\delta'' \ge 0$. On the event \eqref{event}, it holds therefore, for $\bar{t} \le t \le T$,
\[  \abs{X^\veps_t} \ge  \delta  +  \int_{\bar{t}}^t\abs{X^\veps_s}_\delta' \sgn\bra{X^\veps_s} \abs{X^\veps_s}^\gamma ds\comma \]
where we used the fact that $\abs{\abs{x}_\delta - \abs{x}} \le \delta$ and the estimate on $\abs{X^\epsilon_{\bar{t}}}_\delta$ provided by the lemma. But the lemma shows that also $\abs{X^\epsilon_t}\ge \delta$, so that, as already remarked, $\abs{X^\epsilon_t}_\delta' \sgn\bra{X^\epsilon_t} \ge 1$ and therefore for $\bar{t} \le t \le T$,
\[  \abs{X^\veps_t} \ge \delta  +\int_{\bar{t}}^t\abs{X^\veps_s}^\gamma ds \period\]

Lemma \ref{lemma2} below allows us to conclude that, in the event \eqref{event}, a.s.\ it holds, for $\bar{t}\le t \le T$,
\[ \abs{ X_t^\veps } \ge H^\gamma \bra{t-R\bra{\bar{t},\delta} }\period\]
where $R\bra{\bar{t},\delta}$ is some (explicit) quantity depending also on $\gamma$, which is infinitesimal as $\veps \to 0$, so that the r.h.s.\ above converges uniformly in $t\in[0,T]$ to $H^\gamma \bra{t}$ as $\epsilon \to 0$.

On the other hand, directly from \eqref{sde}, we obtain the estimate
\[ \abs{X^\veps_t} \le \int_0^t \abs{X^\veps_s}^\gamma ds + \veps \abs{W_t}\comma \]
that, thanks to another application of Lemma  \ref{lemma2}, entails that on the event
\begin{equation}\label{eq-event2} \cur{ \sup_{0\le t \le T} \veps \abs{W_t} \le \veps^a} \end{equation}
it holds
\[ \abs{ X_t^\veps } \le H^\gamma  \bra{t-R\bra{0,\veps^a}} \period\]
Again, as $\veps \to 0$, $R\bra{0,\veps^a} \to 0$ and therefore the r.h.s.\ above converges uniformly in $t\in[0,T]$  to $H^\gamma \bra{t}$.

By applying Doob's inequality and It\^o's isometry, we see that the intersection of the events \eqref{event} and \eqref{eq-event2} has probability greater than $1-\alpha$, where we introduce the infinitesimal $\alpha = 2T \veps^{2-a}$. On this intersection, it holds, for $\bar{t} \le t\le T$,
\[ \abs{ \abs{ X_t^\veps } - H\bra{t} } \le h \comma \]
where
\[ h = \max_{0\le t \le T}  \cur{ \abs{H^\gamma\bra{t} - H^\gamma \bra{t-R\bra{\bar{t},\delta}}}, \abs{H^\gamma\bra{t} - H^\gamma \bra{t-R\bra{0,\veps^a}} }} \]
which is easily seen to be infinitesimal, as $\veps \to 0$.

On the other hand, we already know from Lemma \ref{lemma1} above that $\abs{X_t^\veps } \ge \delta \neq 0$, for $\bar{t} \le t \le T$ and therefore we can conclude as in the proof for the $\gamma =0$ case. Indeed, in the intersection of the events \eqref{event} and \eqref{eq-event2}, whose probability is greater than $1-\alpha$, almost surely,
\begin{itemize}
\item either for any $\bar{t}\le t \le T$, $\abs{X^\veps_t} = X^\veps_t$ and 
$\abs{ X^\veps_t -   H^\gamma\bra{t} } \le h$,
\item or  for any $\bar{t}\le t \le T$, $\abs{X^\veps_t} = -X^\veps_t$ and 
$\abs{ X^\veps_t + H^\gamma\bra{t} } \le h$,
\end{itemize} 

and the proof is completed. It remains only the following comparison lemma that was used above.

\begin{lemma}\label{lemma2}
Given $0\le \bar{t} \le T$, $\delta >0$ and a non-negative continuous function $f$ on $[\bar{t}, T]$, such that for any $t\in[\bar{t}, T]$,
\[ f\bra{t} \ge \delta + \int_{\bar{t}}^t f\bra{s}^\gamma ds \quad  \textrm{(respectively, $\le$)}\period \]
Then, for $t\in[\bar{t}, T]$,
\[ f\bra{t} \ge H^\gamma\bra{ t - R\bra{\bar{t}, \delta}} \quad \textrm{(respectively, $\le$)}\period\]
where $R\bra{\bar{t}, \delta} = \bar{t} - \delta^{1+\gamma}/(1-\gamma)$.
\end{lemma}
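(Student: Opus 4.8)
\emph{Proof sketch.} The plan is to reduce the integral inequality to a first-order differential inequality for an auxiliary function and then integrate it by separation of variables; the essential point is that the additive constant $\delta$ keeps the auxiliary function bounded away from $0$, where the nonlinearity $y \mapsto y^\gamma$ is singular, so no appeal to a Bihari-type inequality in full generality is needed.

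First I would introduce $g\bra{t} := \delta + \int_{\bar{t}}^t f\bra{s}^\gamma\, ds$ for $t \in [\bar{t}, T]$. Since $f$ is continuous and non-negative, $g$ is $C^1$ with $g\bra{\bar{t}} = \delta$, $g'\bra{t} = f\bra{t}^\gamma$, and $g\bra{t} \ge \delta > 0$ throughout. In the ``$\ge$'' case the hypothesis says exactly $f\bra{t} \ge g\bra{t}$, so, using that $y \mapsto y^\gamma$ is non-decreasing on $[0,\infty)$, one gets $g'\bra{t} = f\bra{t}^\gamma \ge g\bra{t}^\gamma$; in the ``$\le$'' case one gets $f\bra{t} \le g\bra{t}$ and likewise $g'\bra{t} \le g\bra{t}^\gamma$. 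In either case it suffices to bound $g$, because $f \ge g$ (resp.\ $f \le g$).

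Next, since $g \ge \delta > 0$ and $1-\gamma > 0$, the function $t \mapsto g\bra{t}^{1-\gamma}$ is $C^1$ and
\[ \frac{d}{dt}\bra{ \frac{g\bra{t}^{1-\gamma}}{1-\gamma} } = g\bra{t}^{-\gamma}\, g'\bra{t} \ge 1 \qquad \bra{\textrm{resp.\ } \le 1}\period \]
Integrating from $\bar{t}$ to $t$ and using $g\bra{\bar{t}} = \delta$ yields $g\bra{t}^{1-\gamma} \ge \bra{1-\gamma}\bra{t-\bar{t}} + \delta^{1-\gamma}$ (resp.\ $\le$). I would then observe that, recalling $H^\gamma\bra{s}^{1-\gamma} = \bra{1-\gamma}s$ for $s \ge 0$, the right-hand side equals $\bra{1-\gamma}\bra{t - R\bra{\bar{t},\delta}}$ precisely when $R\bra{\bar{t},\delta}$ is the shift for which $H^\gamma\bra{\bar{t}-R\bra{\bar{t},\delta}} = \delta$, i.e.\ $R\bra{\bar{t},\delta} = \bar{t} - \delta^{1-\gamma}/\bra{1-\gamma}$; note that then $t - R\bra{\bar{t},\delta} = \bra{t-\bar{t}} + \delta^{1-\gamma}/\bra{1-\gamma} > 0$ on $[\bar{t},T]$, so this shift is chosen exactly so that the bound is sharp at the initial time $t = \bar{t}$. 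Taking $\bra{1-\gamma}$-th roots (legitimate, both sides being positive, and monotone) gives $g\bra{t} \ge H^\gamma\bra{t - R\bra{\bar{t},\delta}}$ (resp.\ $\le$), and combining with $f \ge g$ (resp.\ $f \le g$) completes the argument.

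There is no genuine obstacle beyond bookkeeping. The only step requiring a word of care is the division by $g^\gamma$, which is licit exactly because the constant $\delta$ is strictly positive --- had $\delta$ been $0$ one would have to work with the maximal solution of $\dot y = y^\gamma$, $y\bra 0 = 0$, as in the non-uniqueness discussion for the ODE in the introduction. Everything else is a one-line separation-of-variables computation, and the value of $R\bra{\bar{t},\delta}$ is forced by the requirement that the comparison be an equality at $t=\bar{t}$.
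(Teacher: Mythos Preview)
Your proof is correct and proceeds by a slightly different mechanism than the paper's. Both arguments introduce the auxiliary function $g(t)=\delta+\int_{\bar t}^t f(s)^\gamma\,ds$; the paper then forms the difference $D(t)=g(t)-H^\gamma(t-R)$ and argues that $D(\bar t)=0$ together with the implication ``$D(t)\ge 0\Rightarrow D'(t)\ge 0$'' (which follows from the hypothesis $f\ge g$ and monotonicity of $x\mapsto x^\gamma$) forces $D\ge 0$ on $[\bar t,T]$. You instead extract the autonomous differential inequality $g'\ge g^\gamma$ (resp.\ $\le$) and integrate it explicitly by separation of variables, obtaining $g(t)^{1-\gamma}\ge\delta^{1-\gamma}+(1-\gamma)(t-\bar t)$ and recognising the right-hand side as $H^\gamma(t-R)^{1-\gamma}$. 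Your route is marginally more self-contained---no sign-preservation lemma is invoked---and makes the role of the strict positivity $\delta>0$ completely explicit; the paper's comparison argument, on the other hand, would adapt without change to nonlinearities for which the ODE $\dot y=\phi(y)$ cannot be integrated in closed form. Incidentally, your computation yields $R(\bar t,\delta)=\bar t-\delta^{1-\gamma}/(1-\gamma)$, which is indeed the shift making $H^\gamma(\bar t-R)=\delta$; the exponent $1+\gamma$ printed in the statement is a typo.
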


\begin{proof}

The term $R\bra{\bar{t}, \delta}$ is defined in such a way that $t \mapsto H^\gamma\bra{ t - R}$ is the solution of the ODE, 
\[ x\bra{t} = \delta + \int_{\bar{t}}^t x\bra{s}^\gamma ds\comma \quad (\bar{t} \le t \le T)\period\]
Moreover, $H^\gamma\bra{ t - R}$ is continuously differentiable on $[\bar{t}, T]$.

Let $R = R\bra{\bar{t}, \delta}$ and set
\[ D\bra{t} = \delta + \int_{\bar{t}}^t f\bra{s}^\gamma ds - H^\gamma\bra{ t - R}\comma \]
which is continuously differentiable on $[\bar{t}, T]$, with $D\bra{\bar{t}}=0$ and derivative
\[ D'\bra{t} = f\bra{t}^\gamma - \bra{H^\gamma}'\bra{ t - R} = f\bra{t}^\gamma - \bra{H^\gamma
\bra{ t - R}}^\gamma\]
by the remark above. Now, since $x \mapsto x^\gamma$ is increasing for $x>0$, from the hypothesis we obtain that for $t\in[\bar{t},T]$, the condition $D\bra{t} \ge 0$ implies $D'\bra{t} \ge 0$ and so we conclude  that $D\bra{t}\ge0$ for $t$ in this range. The other case is similar.
\end{proof}

\section*{Acknowledgements}
The author is grateful to M.~Maurelli for many discussions on the subject and thanks F.~Flandoli and M.~Pratelli for their support.

\bibliographystyle{amsplain}
\bibliography{biblio}

\providecommand{\bysame}{\leavevmode\hbox to3em{\hrulefill}\thinspace}
\providecommand{\MR}{\relax\ifhmode\unskip\space\fi MR }
\providecommand{\MRhref}[2]{%
  \href{http://www.ams.org/mathscinet-getitem?mr=#1}{#2}
}
\providecommand{\href}[2]{#2}
\begin{thebibliography}{10}

\bibitem{attanasio}
Stefano Attanasio and Franco Flandoli, \emph{Zero-noise solutions of linear
  transport equations without uniqueness: an example}, C. R. Math. Acad. Sci.
  Paris \textbf{347} (2009), no.~13-14, 753--756. \MR{2543977 (2010i:60172)}

\bibitem{bafico}
R.~Bafico and P.~Baldi, \emph{Small random perturbations of {P}eano phenomena},
  Stochastics \textbf{6} (1981/82), no.~3-4, 279--292. \MR{665404 (83j:60082)}

\bibitem{caramellino}
P.~Baldi and L.~Caramellino, \emph{General {F}reidlin-{W}entzell large
  deviations and positive diffusions}, Statist. Probab. Lett. \textbf{81}
  (2011), no.~8, 1218--1229. \MR{2803766 (2012g:60086)}

\bibitem{borkar}
V.~S. Borkar and K.~Suresh Kumar, \emph{A new {M}arkov selection procedure for
  degenerate diffusions}, J. Theoret. Probab. \textbf{23} (2010), no.~3,
  729--747. \MR{2679954 (2011m:60171)}

\bibitem{buckdahn}
R.~Buckdahn, Y.~Ouknine, and M.~Quincampoix, \emph{On limiting values of
  stochastic differential equations with small noise intensity tending to
  zero}, Bull. Sci. Math. \textbf{133} (2009), no.~3, 229--237. \MR{2512827
  (2010b:60167)}

\bibitem{delarue}
F.~Delarue, F.~Flandoli, and D.~Vincenzi, \emph{Noise prevents collapse of
  {V}lasov-{P}oisson point charges}, {To appear in } Comm. Pure Appl. Math.

\bibitem{feng}
J.~Feng and T.G. Kurtz, \emph{Large deviations for stochastic processes},
  Mathematical Surveys and Monographs, vol. 131, AMS, 2006.

\bibitem{flandoli}
Franco Flandoli, \emph{Random perturbation of {PDE}s and fluid dynamic models},
  Lecture Notes in Mathematics, vol. 2015, Springer, Heidelberg, 2011, Lectures
  from the 40th Probability Summer School held in Saint-Flour, 2010.
  \MR{2796837 (2012c:60162)}

\bibitem{gradinaru}
Mihai Gradinaru, Samuel Herrmann, and Bernard Roynette, \emph{A singular large
  deviations phenomenon}, Ann. Inst. H. Poincar\'e Probab. Statist. \textbf{37}
  (2001), no.~5, 555--580. \MR{1851715 (2002j:60042)}

\bibitem{hermann}
Samuel Herrmann, \emph{Ph\'enom\`ene de {P}eano et grandes d\'eviations}, C. R.
  Acad. Sci. Paris S\'er. I Math. \textbf{332} (2001), no.~11, 1019--1024.
  \MR{1838131 (2002c:60098)}

\bibitem{pamen}
Olivier~Pamen Menoukeu, Thilo Meyer-Brandis, and Frank~Norbert Proske, \emph{A
  {G}el'fand triple approach to the small noise problem for discontinuous
  {ODE}'s},  (2010).

\bibitem{revuz}
Daniel Revuz and Marc Yor, \emph{Continuous martingales and {B}rownian motion},
  third ed., Grundlehren der Mathematischen Wissenschaften [Fundamental
  Principles of Mathematical Sciences], vol. 293, Springer-Verlag, Berlin,
  1999. \MR{1725357 (2000h:60050)}

\end{thebibliography}

\end{document}